\DeclareMathAlphabet{\mathup}{OT1}{\familydefault}{m}{n}
\newcommand{\dd}[1]{\mathop{}\!\mathup{d} #1}
\def\XXint#1#2#3{{\setbox0=\hbox{$#1{#2#3}{\int}$ }
\vcenter{\hbox{$#2#3$ }}\kern-.59\wd0}}
\newtheorem{theorem}{Theorem}[section]
\newtheorem{lemma}{Lemma}[section]
\theoremstyle{definition}
\theoremstyle{remark}
\newtheorem{remark}{Remark}[section]
\numberwithin{equation}{section}
\begin{document}

\title[Enhanced Dissipation via the Malliavin Calculus]{Enhanced Dissipation via the Malliavin Calculus}

\author[D. Villringer]{David Villringer}
\address{Department of Mathematics, Imperial College London, London, SW7 2AZ, UK}
\email{d.villringer22@imperial.ac.uk}

\subjclass[2020]{35H10, 35Q35, 60H07, 76F10
}

\keywords{Enhanced dissipation, Gevrey hypoellipticity, shear flows, Malliavin Calculus}

\begin{abstract}
In this work we investigate the phenomenon of enhanced dissipation using techniques from the Malliavin Calculus. In particular, we construct a concise, elementary argument, that allows us to recover the well-known enhanced dissipation timescale for shear flows, first obtained in \cite{Bedrossian_CotiZelati_2017}, as well as the precise hypoelliptic regularisation in Fourier space.
\end{abstract}

\maketitle

\tableofcontents
\section{Introduction and main results}
On the two-dimensional periodic torus $\mathbb{T}^2$, we consider passive scalars advected by a shear flow and satisfying the hypoelliptic drift-diffusion equation 
\begin{equation}
\label{eq: hypoelliptic equation}
\begin{cases}
\partial_t f+u(y)\partial_x f=\frac{1}{2}\nu \partial^2_y f, \\
f(0)=f_0,
\end{cases}
\end{equation}
where $u=u(y):\mathbb{T}\to \mathbb{R}$ is a fixed, smooth function, $\nu\in (0,1]$ is a diffusivity parameter, and $f_0:\mathbb{T}^2\to \mathbb{R}$ is an $L^2(\mathbb{T}^2)$-initial datum such that
\begin{equation}\label{eq:meanzero}
\int_\mathbb{T}f_0(x,y)\dd x =0, \qquad \forall y\in \mathbb{T},
\end{equation}
which is a property preserved by the equation.

% of the form 
% \begin{align}
% \begin{cases}
% \partial_t g +u(y)\partial_x g=\frac{1}{2}\nu \Delta g, \\
% g|_{t=0}=f_0,  
% \end{cases}
% \end{align}
% where $u=u(y):\mathbb{T}\to \mathbb{R}$ is a fixed, smooth function, and $f_0:\mathbb{T}^2\to \mathbb{R}$ in an $L^2(\mathbb{T}^2)$-initial datum such that
% $$
% \int_\mathbb{T}f_0(x,y)\dd x =0, \qquad \forall y\in \mathbb{T},
% $$
% which is a property preserved by the equation. In particular, this equation can be obtained from its hypoelliptic analogue
% \begin{equation}
% \label{eq: hypoelliptic equation}
% \begin{cases}
% \partial_t f+u(y)\partial_x f=\frac{1}{2}\nu \partial^2_y f \quad (x,y)\in \mathbb{T}^2\\
% f(0)=f_0
% \end{cases}
% \end{equation}
% via the transformation $g=e^{ \frac{1}{2}\nu \partial_x^2 t}f$. 
%We therefore restrict our focus to \eqref{eq: hypoelliptic equation}. 

Throughout the article, we will assume that $u$ has a finite number of critical points, of maximal order $n_0$ (i.e., for any $y \in \mathbb{T}$, there exists $n \leq n_0$ so that $u^{(n+1)}(y)\neq 0$). The above equation \eqref{eq: hypoelliptic equation} has received a great deal of attention in recent years. In particular, much has been said about the particular interplay between the transport term and the diffusivity, which in principle should allow for faster convergence to equilibrium than the natural dissipative rate proportional to $\nu$ -- a phenomenon known as \emph{enhanced dissipation}. Indeed, in \cite{Constantin_Kiselev_Ryzhik_Zlatos_2008} the authors derived a spectral criterion for when solutions to a given abstract PDE in a Hilbert space experience enhanced dissipation. (This provides one way of seeing where the condition \eqref{eq:meanzero} comes from). More recently, in \cite{Bedrossian_CotiZelati_2017} the authors studied the equation \eqref{eq: hypoelliptic equation}, and were able to show, using techniques from hypocoercivity, that there holds the \emph{enhanced dissipation estimate} 
\begin{equation}\label{eq:logstuff}
\|S_\nu(t)P_k\|_{L^2\to L^2} \leq C e^{-\epsilon\frac{\nu^{\frac{n_0+1}{n_0+3}}|k|^\frac{2}{n_0+3}t}{(1+|\log(\nu)|+|\log(|k|)|)^2}}
\end{equation}
where $S_\nu(t)$ denotes the semigroup generated by \eqref{eq: hypoelliptic equation}, and $P_k$ denotes the projection onto the $k^{th}$ Fourier mode in $x$. The logarithmic factor was subsequently removed in \cite{Albritton_Beekie_Novack_2022a}, employing methods from hypoellipticity, and in \cite{CZG23}, via pseudospectral estimates inspired by \cite{Wei_2019}.

The present article is is most spiritually in line with \cite{Albritton_Beekie_Novack_2022a}.
Indeed, the development of the Malliavin calculus was in large part motivated by the desire to formulate an elementary probabilistic proof of H\"ormander's theorem \cite{Hormander_2009}, which avoids having to work with pseudodifferential operators. Inspired by this, we show that employing only basic tools from Stochastic Analysis, the Malliavin Calculus provides a tool for proving the Semigroup estimate \eqref{eq:logstuff} in a concise way. Indeed, we will be able to recover the following result:
\begin{theorem}[\cite{Bedrossian_CotiZelati_2017, Albritton_Beekie_Novack_2022a, CZG23}]
\label{main result}
Let $P_k$ denote the projection onto the $k^{th}$ fourier mode in x, and let $S_\nu$ denote the Semigroup generated by \eqref{eq: hypoelliptic equation}. Then, there exist constants $C,c,\Tilde{\nu}>0$ independent of $f_0$, so that for any $\nu |k|^{-1}\leq \Tilde{\nu}$ it holds
\begin{equation}
\label{eq: main result}
\|S_\nu(t)P_k\|_{L^2\to L^2} \leq C e^{-c|k|^{\frac{2}{n_0+3}}\nu^{\frac{n_0+1}{n_0+3}}t},
\end{equation}
for any $t\geq0$.
\end{theorem}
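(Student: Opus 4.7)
My plan is to build the proof on the probabilistic (Feynman--Kac) representation of the semigroup and to exploit Malliavin integration by parts on the resulting oscillatory integral. Writing $Y_s^y := y + \sqrt{\nu}B_s$ for the Brownian motion on $\mathbb{T}$ started at $y$, projection onto the $k$-th Fourier mode in $x$ reduces $S_\nu(t)P_k$ to the scalar semigroup
\[
(T_k(t)g)(y) := \mathbb{E}\!\left[ g(Y_t^y)\, e^{-ik\phi_t^y} \right], \qquad \phi_t^y := \int_0^t u(Y_s^y)\,ds,
\]
acting on $g \in L^2(\mathbb{T})$, with $\|S_\nu(t)P_k\|_{L^2(\mathbb{T}^2)\to L^2(\mathbb{T}^2)} = \|T_k(t)\|_{L^2(\mathbb{T})\to L^2(\mathbb{T})}$. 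I would aim to establish a short-time contraction $\|T_k(\tau)\|_{L^2\to L^2} \leq 1 - \delta$ on the enhanced dissipation timescale $\tau := (|k|^2 \nu^{n_0+1})^{-1/(n_0+3)}$; iterating this contraction over $\lfloor t/\tau \rfloor$ windows then delivers the exponential decay at rate $\lambda \sim 1/\tau$ asserted by Theorem~\ref{main result}.

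The core of the argument is a Malliavin integration by parts performed on the oscillatory factor. Computing
\[
D_r \phi_t^y = \sqrt{\nu}\int_r^t u'(Y_s^y)\,ds, \qquad 0 \le r \le t,
\]
and setting the scalar Malliavin matrix $\gamma_t := \int_0^t |D_r \phi_t^y|^2\, dr$, the identity $\partial_z e^{-ikz} = -ik e^{-ikz}$ combined with iterated duality between the Malliavin derivative and the Skorokhod integral yields the representation
\[
\mathbb{E}\!\left[ g(Y_t^y)\, e^{-ik\phi_t^y}\right] = (-ik)^{-N}\, \mathbb{E}\!\left[ e^{-ik\phi_t^y}\, \mathcal{H}_N\big(g(Y_t^y)\big) \right],
\]
where $\mathcal{H}_N$ is an $N$-fold iterated Skorokhod-type operator built out of $\gamma_t^{-1}$, $D^j \phi_t^y$, and $D^j Y_t^y$. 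Heuristically each iteration contributes a factor of $(|k|\gamma_t^{1/2})^{-1}$, so that with a suitable choice of $N$ (and a standard heat-kernel based $L^2\to L^\infty$ smoothing decomposition $T_k(\tau) = T_k(\tau/2)\circ T_k(\tau/2)$) one converts this representation into the required short-time pointwise, and hence $L^2\to L^2$, contraction.

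The main obstacle is to obtain sharp negative-moment estimates $\mathbb{E}[\gamma_t^{-p}] \leq C_p$ with the correct scaling in $\nu$ and $t$. Brownian scaling suggests that near a critical point of $u$ of order $n \leq n_0$ one has $u'(Y_s^y) \sim (Y_s^y - y_\star)^n \sim (\sqrt{\nu s})^n$, whence $\gamma_t \sim \nu^{n+1} t^{n+3}$; the worst case $n = n_0$ matches exactly the exponent $n_0+3$ dictated by the rate in Theorem~\ref{main result}. Making this heuristic rigorous requires a Norris-type iterative scheme, in the spirit of the Malliavin proof of H\"ormander's theorem: the hypothesis of bounded-order critical points is leveraged to rule out the simultaneous near-vanishing of $u', u'', \ldots, u^{(n_0+1)}$ along a Brownian trajectory, producing a high-probability lower bound $\gamma_t \gtrsim \nu^{n_0+1} t^{n_0+3}$. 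Plugging this back into the IBP representation, and absorbing the exceptional set via the trivial bound $|e^{-ik\phi_t^y}| = 1$, yields the short-time contraction, after which semigroup iteration completes the proof.
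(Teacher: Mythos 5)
Your overall architecture --- Feynman--Kac representation of the Fourier-projected semigroup, Malliavin integration by parts against the oscillatory phase $e^{-ik\phi_t^y}$, negative moments of a Malliavin-type quantity with the scaling $\nu^{n_0+1}t^{n_0+3}$, and iteration of a short-time contraction on the timescale $(|k|^2\nu^{n_0+1})^{-1/(n_0+3)}$ --- is exactly the paper's, and your scaling heuristic near a critical point of order $n$ is correct. But there is a genuine gap in the choice of integration-by-parts weight. You take the \emph{scalar} Malliavin matrix $\gamma_t=\int_0^t|D_r\phi_t^y|^2\,dr$ and the weight $\gamma_t^{-1}D\phi_t^y$. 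Writing out the duality, $\mathbb{E}[g(Y_t^y)e^{-ik\phi}]=\tfrac{i}{k}\mathbb{E}\bigl[e^{-ik\phi}\bigl(g(Y_t^y)\delta(\gamma_t^{-1}D\phi)-\langle D(g(Y_t^y)),\gamma_t^{-1}D\phi\rangle\bigr)\bigr]$, and the second term equals $\sqrt{\nu}\,g'(Y_t^y)\,\gamma_t^{-1}\int_0^t D_r\phi\,dr$, which does not vanish and requires a derivative of the merely-$L^2$ datum $g$; approximating $g$ by smooth functions does not help because no uniform bound on $g'$ is available at the scales you need. The paper's weight (its $Y_{r,t}$ in \eqref{Y expression}) is built from the full $2\times 2$ Malliavin matrix of $(X_t^1,X_t^2)$, equivalently from the \emph{mean-zero projection} $D_r\phi-\tfrac1t\int_0^tD_m\phi\,dm$; since $D_rY_t^y=\sqrt{\nu}$ is constant in $r$, this projection makes $\langle D(g(Y_t^y)),Y_{\cdot,t}\rangle=0$ identically, so the datum is never differentiated. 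Correspondingly, the relevant determinant is not your $\gamma_t$ but the Schur complement $\int_0^t\bigl(\int_r^tu'-\tfrac1t\int_0^t\int_m^tu'\bigr)^2dr$. Your iterated ($N$-fold) IBP with $L^2\to L^\infty$ smoothing is also unnecessary: one integration by parts, Cauchy--Schwarz, and duality give the $L^2\to L^2$ bound directly, at the price of controlling the full Skorokhod second moment \eqref{Skorokhod bound}, whose non-adapted correction $\int\int \mathcal{D}_sY_t\mathcal{D}_tY_s$ forces a second inverse-moment estimate involving $t\|u''\|_{L^\infty_t}/\det(M_t)$ that your outline does not anticipate.

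The second substantive issue is your reliance on ``a Norris-type iterative scheme'' for the negative moments of $\gamma_t$. The paper argues explicitly that this route is doomed for the \emph{sharp} rate: Norris' lemma does not come with sharp exponents, and tracking powers of $t,\nu$ through it reintroduces losses (this is essentially why \cite{Bedrossian_CotiZelati_2017} carries logarithmic corrections). Instead the paper shows that smallness of $\det(M_t)$ almost-surely forces the trajectory $y+\sqrt{\nu}B_s$ to localise near a single critical point $y_0$ of some order $n$, Taylor-expands $u'$ and $u''$ there ($c_1|z-y_0|^n\le|u'(z)|\le c_2|z-y_0|^n$), and closes the argument with a quantitative interpolation lemma (Lemma \ref{interpolation lemma}) applied twice plus elementary Brownian scaling, which is what delivers the exact exponents $t^{-p(n_0+3)}\nu^{-pn_0}$. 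You would need to replace your Norris step with an argument of this quantitative type to reach the stated theorem without logarithmic loss.
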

\begin{remark}
Although from our viewpoint the principle interest of this estimate is its relationship to enhanced dissipation, we note that it also provides estimates on the degree of hypoelliptic regularisation that the equation enjoys. Indeed, as noted already in \cite{Bedrossian_CotiZelati_2017}, standard parabolic regularisation in $y$ together with \eqref{eq: main result} shows that the solution to \eqref{eq: hypoelliptic equation} lives in the Gevrey space 
$$
\mathcal{G}^\frac{n_0+3}{2}:=\bigcup_{\lambda>0}\{f \in L^2: e^{\lambda |\nabla|^\frac{2}{n_0+3}}f \in L^2\}
$$ 
for any positive $t$.
\end{remark}
The proof of Theorem \ref{main result} will follow by applying the Malliavin integration by parts formula to functions of the stochastic characteristics associated to \eqref{eq: hypoelliptic equation}, given by 
\begin{equation}
\dd X_t =\begin{pmatrix}
u(X_t^{2})\\
0
\end{pmatrix} \dd t+\begin{pmatrix}
0\\
\sqrt{\nu}\dd B_t
\end{pmatrix}
\end{equation}
where $B_t$ is a standard Brownian motion.  The exact construction will be detailed in the next section. In particular, due to the simplicity of this process, we will be able to perform explicit computations on quantites related to the Malliavin matrix, allowing us to extract the optimal dissipation timescales.
\subsection{Malliavin Calculus Preliminaries}
We begin by recalling a couple of elementary facts and constructions from the Malliavin calculus. A more thorough introduction can be found in \cite{Nualart_1995a} (see also the excellent set of notes \cite{Hairer}). Set $H=L^2(\mathbb{R}_+)$, and let $(\Omega,\mathcal{F},\mathbb{P})$ be a probability space. An isonormal Gaussian process $W$ is  an isometry $W:H\to L^2(\Omega,\mathbb{P})$, i.e. a map satisfying 
\begin{equation}
\mathbb{E}(W(h)W(g))=\langle h,g\rangle_{H}, \qquad \forall h,g\in H.
\end{equation}
In particular, an isonormal Gaussian process defines a standard Brownian motion by setting 
\begin{equation}
B_t=W(\mathds{1}_{[0,t]}).
\end{equation}
The Malliavin derivative is then an unbounded, closed operator $\mathcal{D}:L^2(\Omega,\mathbb{P}) \to L^2(\Omega,\mathbb{P},H)$ where we denote its domain by $\mathbb{D}^{1,2}:=D(\mathcal{D})$, which acts on functions of the form $G=F(W(h_1),\dots W(h_n))$, where $F$ is smooth and has at most polynomial growth at infinity, by 
\begin{equation}
\mathcal{D}G=\sum_{i=1}^n \partial_i F(W(h_1),\dots W(h_n))h_i .
\end{equation}
More generally, we can define the Malliavin derivative as an operator from $L^2(\Omega,\mathbb{P},K) \to L^2(\Omega,\mathbb{P},H\otimes K)$, where $K$ is some Hilbert space, in which case we denote its domain by $\mathbb{D}^{1,2}(K)$.
We denote the adjoint of the Malliavin derivative by $\delta$, which is known as the Skorokhod integral. An important fact about the Skorokhod integral is that for any $h \in \mathbb{D}^{1,2}(H)$, there holds 
\begin{equation}
\label{Skorokhod bound}
\mathbb{E}(\delta h)^2 =\mathbb{E}\int_0^\infty h_s^2 \dd s+\mathbb{E}\int_0^\infty \int_0^\infty \mathcal{D}_s h_t \mathcal{D}_t h_s \dd s \dd t.
\end{equation}
Given a Malliavin differentiable random variable $X \in \mathbb{R}^d$, we can associate the Malliavin matrix $\mathcal{M}$, which is a symmetric, positive semi-definite matrix with components given by 
\begin{equation}
\mathcal{M}_{i,j}=\langle \mathcal{D}X_i,\mathcal{D}X_j \rangle.
\end{equation}
In particular, under the additional assumption that the Malliavin matrix is almost surely invertible and has inverse moments of all orders, we obtain the following ``integration by parts" formula: Let $X \in \mathbb{D}^{1,2}$. If $G:\mathbb{R}^d \to \mathbb{R}$ is a smooth function, there holds
\begin{equation}
\label{integration by parts}
\mathbb{E}(\partial_i G(X))=\mathbb{E}(G(X)\delta(Y_i)),
\end{equation}
with $Y_i=(\mathcal{D}X)^*\mathcal{M}^{-1}e_i$,
where $e_i$ is the $i^{th}$ euclidean basis vector.

\subsection{Main ideas of the proof}
As noted above, one can associate the following ``stochastic characteristics" to our PDE  \eqref{eq: hypoelliptic equation}.
\begin{align}
X^1_t(x,y)=x+\int_{0}^tu\left(y+\sqrt{\nu} B_s\right) \dd s, \qquad X^2_t(x,y)=y+\sqrt{\nu} B_t.
\end{align}
Indeed, denoting the vector $X_t=(X_t^1,X_t^2)$ with notation as above, an application of Ito's formula shows that the solution to \eqref{eq: hypoelliptic equation} is then given by $f(t,x,y)=\mathbb{E}f_0(X_t^{-1}(x,y))$.
Elementary calculations show that the Malliavin derivative is given by
\begin{equation}
\mathcal{D}_rX_t = \begin{pmatrix}
\displaystyle\sqrt{\nu} \int_r^t u'(y+\sqrt{\nu}B_s)\dd s \vspace{2mm}\\
\sqrt{\nu} \mathds{1}_{r\leq t},
\end{pmatrix}
\end{equation}
and therefore, the Malliavin matrix is just
\begin{equation}
\mathcal{M} =\nu \begin{pmatrix}
\displaystyle\int_{0}^t \left(\int_r^t u'\left(y+\sqrt{\nu}B_s\right)\dd s\right)^2\dd r & \displaystyle\int_{0}^t \int_r^t u'\left(y+\sqrt{\nu}B_s\right)\dd s \dd r\vspace{2mm}\\
\displaystyle\int_{0}^t \int_r^t u'\left(y+\sqrt{\nu}B_s\right)\dd s \dd r & t
\end{pmatrix}.
\end{equation}
The point will be then to apply the Malliavin integration by parts formula \eqref{integration by parts} to a well chosen function, which in turn entails estimaing the behaviour of $\mathcal{M}^{-1}$. At this point it may be tempting to simply go through the proof of H\"ormander's theorem and keep track of the powers of $t, \nu$ appearing. However, this approach is doomed to fail for a couple of reasons. Firstly, since the standard proof of H\"ormander's theorem relies on an application of Norris' lemma, which does not come with sharp estimates on the exponents its produces, this would make recovering the correct scaling quite challenging. More fundamentally, estimating inverse moments of the operator norm of the Malliavin matrix cannot possibly yield the decay rate we are after, since it will in particular be dominated by the component of the matrix corresponding to a ``nudge" in the $y$-direction. Since any function solely depending on $y$ cannot experience dissipation on any time scale faster than that of the heat equation, this ensures that estimating the inverse moments of the Malliavin matrix would at best leave us with a decay rate proportional to $e^{-\nu t}$. We therefore need to tailor our arguments to ensure we only capture the variations in the $x$ direction.
Fortunately, since we assume in \eqref{eq:meanzero} that $f_0$ has zero mean in $x$, we note that it may be written as $f_0=\partial_x F_0$, for some function $F_0$. Therefore, applying the expression \eqref{integration by parts}, we see that 
\begin{equation}
\mathbb{E}(f_0(X_t))=\mathbb{E}(\partial_x F_0(X_t)) =\mathbb{E}(F_0(X_t) \delta (Y_t(y))),
\end{equation}
where $\delta$ denotes the Skorokhod integral, and 
\begin{equation}
Y_t(y)=(\mathcal{D}X_t)^* \mathcal{M}^{-1}\begin{pmatrix}
    1\\
    0
\end{pmatrix},
\end{equation}
which encodes the fact that we only care about variations in the $x$-direction.
In particular, calculating $Y$ explicitly yields 
\begin{equation}
\label{Y expression}
Y_{r,t}(y)=\frac{\displaystyle\int_{r}^tu'(y+\sqrt{\nu}B_s) \dd s -\frac{1}{t}\int_{0}^t\int_m^t u'(y+\sqrt{\nu}B_s)\dd s\dd m}{\displaystyle\sqrt{\nu} \int_0^t\left(\int_{r}^tu'(y+\sqrt{\nu}B_s) \dd s -\frac{1}{t}\int_{0}^t\int_m^t u'(y+\sqrt{\nu}B_s)\dd s\dd m\right)^2\dd r}.
\end{equation}
Pretending for a second that $Y$ was adapted to the filtration generated by $(B_t)_{t \geq 0}$, the Ito isometry shows that 
\begin{equation}
\label{ito isometry}
\mathbb{E}(\delta Y_t(y))^2 \leq \mathbb{E}\left[\frac{1}{\displaystyle\nu\int_0^t\left(\int_{r}^tu'(y+\sqrt{\nu}B_s) \dd s -\frac{1}{t}\int_{0}^t\int_m^t u'(y+\sqrt{\nu}B_s)\dd s\dd m\right)^2\dd r}\right].
\end{equation}
It therefore is clear that we need to estimate the inverse moments of the following quantity, where we make a slight abuse of notation and denote it by $\det(M)$ (it actually differs from the Malliavin determinant by a factor of $\nu$):
\begin{equation}
\det(M_t)(y):=\int_0^t\left(\int_{r}^tu'(y+\sqrt{\nu}B_s) \dd s -\frac{1}{t}\int_{0}^t\int_m^t u'(y+\sqrt{\nu}B_s)\dd s\dd m\right)^2\dd r,
\end{equation}
We will occasionally suppress the dependence on $y$, whenever there is no ambiguity. At this point, we will be able to argue in much the same way as in the proof of H\"ormander's theorem. In particular, since our situation is much simpler than the general case of H\"ormander, it will be possible to proceed entirely without employing Norris' lemma. Instead, we will simply localise around critical points in an appropriate way, and employ Taylor approximation arguments, together with elementary Brownian scaling.

\section{Proof of the Main Result}
We now show how Theorem \ref{main result} may be deduced, once one has obtained good bounds on the behaviour of $\det(M)$. This way, all that will be left to do is to obtain said bounds, which will be the subject of section \ref{Malliavin Proofs}. To ease in notation, we will from now on write $\|\cdot\|_{L^\infty_t}$ to mean $\|\cdot\|_{L^\infty(0,t)}$, and similarly for H\"older norms. In the cases where spatial norms are used, we will make this explicit. Furthermore, any constants appearing are assumed positive, unless stated otherwise.

\subsection{Proof of Theorem \ref{main result}}
The main ingredient of the proof will be the following set of inequalities, which will be the result of a series of lemmas, whose proofs are left for later:
\begin{lemma}
\label{Important bounds}
Let the maximal order of critical points of $u$ be $n_0$. Then, for any $p\geq 1$ there exists a constant $C(p)>0$ independent of $\nu, t \in [0,\nu^{-1}]$ so that there holds 
\begin{align}
\sup_{y \in \mathbb{T}}\mathbb{E}(\det(M_t)(y)^{-p}) &\leq C(p)t^{-p(n_0+3)}\nu^{-n_0p},\\
\sup_{y \in \mathbb{T}}\mathbb{E}\left(\left(\frac{t\|u''(y+\sqrt{\nu}B_s)\|_{L^\infty_t}}{\det(M_t)(y)}\right)^{p}\right)&\leq C(p)t^{-p\frac{n_0+5}{2}}\nu^{-p\frac{n_0+1}{2}}.
\end{align}
\end{lemma}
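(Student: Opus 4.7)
The main input is a uniform inverse moment estimate for $\det(M_t)(y)$ at the natural scale $t^{n_0+3}\nu^{n_0}$; the first bound follows directly, and the second from Cauchy--Schwarz combined with a Taylor estimate on $\|u''\|_{L^\infty_t}$. The starting point is the Brownian rescaling $W_u := B_{tu}/\sqrt{t}$, which yields
\begin{equation*}
\det(M_t)(y) = t^3 \int_0^1 \left( \int_0^1 (\dsOne_{u > r} - u)\, u'(y + \sqrt{\nu t}\, W_u)\dd u \right)^2 dr,
\end{equation*}
exhibiting the single dimensionless parameter $\alpha := \sqrt{\nu t}$. The quadratic kernel appearing here is exactly the Brownian-bridge (Dirichlet Green's function) covariance $\min(u_1, u_2) - u_1 u_2$, with eigenvalues $1/(j\pi)^2$ and eigenfunctions $\sin(j\pi \cdot)$.

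Next I localize in $y$. For $y$ within distance $O(\alpha)$ of a critical point $y_0$ of order $n \leq n_0$, Taylor expansion $u'(z) = c_n(z - y_0)^n + O((z - y_0)^{n+1})$ and the shift $a := (y-y_0)/\alpha$ reduce the problem (modulo lower-order corrections) to showing $\EX[I_n(a, W)^{-p}] \leq C(p)$ uniformly in $a \in \R$ and $n \leq n_0$, where
\begin{equation*}
I_n(a, W) := \int_0^1 \left( \int_0^1 (\dsOne_{u > r} - u)(a + W_u)^n \dd u \right)^2 dr.
\end{equation*}
For $y$ outside every such window, $|u'(y + \alpha W_\cdot)|$ is uniformly bounded below with overwhelming probability once $\alpha$ is small, making the estimate trivial.

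The technical heart is then the inverse moment bound for $I_n$. My strategy is to use the spectral decomposition of the Brownian-bridge operator to write
\begin{equation*}
I_n(a, W) \geq \sum_{j=1}^N \frac{1}{(j\pi)^2} \Bigl\langle (a + W_\cdot)^n, \sin(j\pi\,\cdot)\Bigr\rangle^2, \qquad \forall\, N \geq 1;
\end{equation*}
if $I_n < \epsilon$, then all $N$ inner products must be simultaneously of size $\lesssim \sqrt{\epsilon}$. These inner products are polynomials of degree $n$ in the finite-dimensional Gaussian vector given by projecting $W$ onto finitely many Legendre-like modes, so a multidimensional Gaussian anti-concentration estimate of Carbery--Wright type yields $\Prob(I_n(a, W) < \epsilon) \leq C_{N,n}\,\epsilon^{N/(2n)}$; taking $N$ sufficiently large produces polynomial decay of any desired order, hence all inverse moments via the layer-cake formula. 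Uniformity in $a$ is ensured by the fact that for every $a$ at least one of the polynomial modes in the expansion $(a+W_\cdot)^n = \sum_{k=0}^n \binom{n}{k} a^{n-k} W_\cdot^k$ remains non-degenerate.

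For the second bound, Cauchy--Schwarz and the crucial Taylor estimate $u''(z) = O(|z-y_0|^{n_0-1})$ near a critical point of order $n_0$ give $\|u''(y + \sqrt{\nu}B_\cdot)\|_{L^\infty_t} \lesssim \|y - y_0 + \sqrt{\nu}B_\cdot\|_{L^\infty_t}^{n_0-1}$, whose $2p$-th moment is of order $(\nu t)^{p(n_0-1)}$ by standard Brownian maximal estimates. Combining with the first bound produces exactly the claimed exponents $t^{-p(n_0+5)/2}\nu^{-p(n_0+1)/2}$, whereas the naive bound $\|u''\|_{L^\infty(\mathbb{T})}$ would cost $(n_0-1)/2$ in each exponent. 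The main obstacle throughout is the uniform-in-$a$ inverse moment bound for $I_n$: careful bookkeeping of which polynomial modes survive in the ``$a$-dominated'' vs.\ ``$W$-dominated'' regimes, together with verifying that sufficiently many modes are jointly non-degenerate uniformly in $a$ to invoke multidimensional Gaussian anti-concentration with the correct decay rate, is the principal technical challenge.
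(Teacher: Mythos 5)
Your reduction is attractive and genuinely different from the paper's route: the exact Brownian rescaling to the dimensionless functional $I_n(a,W)$ built on the Brownian-bridge kernel $\min(u_1,u_2)-u_1u_2$ is correct (the paper instead runs a Norris-free H\"ormander argument: a deterministic interpolation lemma applied twice to pass from $\det(M_t)$ small to $\|u'(y+\sqrt{\nu}B_\cdot)\|_{L^\infty_t}$ small, then Taylor expansion near critical points and the Gaussian small-ball estimate for $\|\sqrt{\nu}B\|_{L^\infty_t}$). However, your argument has a genuine gap at its technical heart. The estimate $\Prob(I_n(a,W)<\epsilon)\leq C_{N,n}\,\epsilon^{N/(2n)}$ does not follow from the observation that $I_n<\epsilon$ forces all $N$ inner products $c_j=\langle (a+W_\cdot)^n,\sin(j\pi\cdot)\rangle$ to be $\lesssim\sqrt{\epsilon}$: applying Carbery--Wright to each $c_j$ separately and intersecting the events only yields $\min_j\Prob(|c_j|\lesssim\sqrt\epsilon)\lesssim\epsilon^{1/(2n)}$, which gives inverse moments only up to order $1/(2n)$, not all orders. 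To extract $\epsilon^{N/(2n)}$ you need a small-ball estimate for the $\R^N$-valued polynomial map $W\mapsto(c_1,\dots,c_N)$, i.e.\ a quantitative joint non-degeneracy (conditional anti-concentration of $c_j$ given $c_1,\dots,c_{j-1}$, or control of the Malliavin matrix of this vector), uniformly in $a\in\R$ and in the Taylor remainder. No off-the-shelf ``multidimensional Carbery--Wright'' delivers this; it is precisely the quantitative content that the paper's interpolation argument supplies, and deferring it as ``the principal technical challenge'' leaves the first bound unproved.

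There is a second, more concrete problem in your derivation of the ratio bound. Decoupling by Cauchy--Schwarz, $\EX[(t\|u''\|_{L^\infty_t}/\det M_t)^p]\leq\EX[(t\|u''\|_{L^\infty_t})^{2p}]^{1/2}\,\EX[\det M_t^{-2p}]^{1/2}$, combined with $\EX\|y-y_0+\sqrt{\nu}B\|_{L^\infty_t}^{2p(n-1)}\sim(\nu t)^{p(n-1)}$, only works when $|y-y_0|\lesssim\sqrt{\nu t}$. In the intermediate regime $\sqrt{\nu t}\ll|y-y_0|=\rho\leq\eta$ one has $\EX\|y-y_0+\sqrt{\nu}B\|^{2p(n-1)}\sim\rho^{2p(n-1)}$, and pairing this with the uniform-in-$y$ bound $\EX[\det M_t^{-2p}]^{1/2}\lesssim t^{-p(n+3)}\nu^{-pn}$ overshoots the target by a factor $(\rho^2/(\nu t))^{p(n-1)/2}\gg1$. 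The point is that $\|u''\|_{L^\infty_t}$ large and $\det M_t$ small are anti-correlated, and Cauchy--Schwarz destroys exactly this correlation; you would need a refined, $y$-dependent inverse-moment bound of the form $\EX[\det M_t^{-2p}]\lesssim(t^3\max(\rho,\sqrt{\nu t})^{2n})^{-2p}$ to repair it. The paper sidesteps this by carrying the random weight $v=t\|u''(y+\sqrt{\nu}B_\cdot)\|_{L^\infty_t}$ through the almost-implication machinery (the parameter $v$ in its Lemma \ref{bound lemma}) and estimating the ratio directly, rather than by splitting the expectation.
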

We now provide a proof of Theorem \ref{main result}, assuming that Lemma \ref{Important bounds} is proved:
Firstly, we note that we have the following bound on the Skorokhod integral of $Y$:
\begin{lemma}
\label{Y bound}
For $Y$ given by \eqref{Y expression}, there exists a positive constant $C$ depending only on $u$, so that for any $t \leq \nu^{-1}$, $\nu>0$ it holds
\begin{equation}
\sup_{y \in \mathbb{T}}\mathbb{E}(\delta Y_t(y))^2 \leq \frac{C}{t^{n_0+3}\nu^{n_0+1}}.
\end{equation}
\end{lemma}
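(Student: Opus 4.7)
The plan is to apply the Skorokhod isometry \eqref{Skorokhod bound} directly to $Y_{\cdot,t}$, treated as an element of $\mathbb{D}^{1,2}(H)$. Writing the numerator of \eqref{Y expression} as
\[
N_r := \int_r^t u'(y+\sqrt{\nu}B_s)\,\dd s - \frac{1}{t}\int_0^t \int_m^t u'(y+\sqrt{\nu}B_s)\,\dd s\,\dd m,
\]
so that $Y_{r,t} = N_r/(\sqrt{\nu}\det(M_t))$, the crucial structural identity to exploit is that $\int_0^t N_r^2\,\dd r = \det(M_t)$ by construction of $\det(M_t)$. For the diagonal term in \eqref{Skorokhod bound}, this immediately collapses $\mathbb{E}\int_0^t Y_{r,t}^2\,\dd r$ to $\mathbb{E}[(\nu\det(M_t))^{-1}]$, which by the first bound of Lemma \ref{Important bounds} with $p=1$ is $\lesssim t^{-(n_0+3)}\nu^{-(n_0+1)}$, exactly the target rate and precisely the estimate one would obtain by pretending $Y$ were adapted, as suggested in \eqref{ito isometry}.

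The real work is in controlling the off-diagonal correction $\mathbb{E}\int_0^t\int_0^t \mathcal{D}_s Y_{r,t}\,\mathcal{D}_r Y_{s,t}\,\dd s\,\dd r$, which I first bound by $\mathbb{E}\int_0^t\int_0^t |\mathcal{D}_s Y_{r,t}|^2\,\dd s\,\dd r$ via Cauchy-Schwarz. Differentiating through the quotient $Y_{r,t} = N_r/(\sqrt{\nu}\det(M_t))$, I use the crude pointwise bounds $|\mathcal{D}_s N_r| \lesssim \sqrt{\nu}\,t\,\|u''(y+\sqrt{\nu}B_\cdot)\|_{L^\infty_t}$ (obtained by differentiating under the integrals defining $N_r$) together with
\[
|\mathcal{D}_s \det(M_t)| \leq 2\int_0^t |N_r|\,|\mathcal{D}_s N_r|\,\dd r \lesssim \sqrt{\nu}\,t^{3/2}\,\|u''(y+\sqrt{\nu}B_\cdot)\|_{L^\infty_t}\,\sqrt{\det(M_t)},
\]
where the final step uses Cauchy-Schwarz and $\int_0^t N_r^2\,\dd r = \det(M_t)$. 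Substituting into the quotient rule yields a pointwise bound of the form
\[
|\mathcal{D}_s Y_{r,t}|^2 \lesssim \frac{t^2 \|u''(y+\sqrt{\nu}B_\cdot)\|_{L^\infty_t}^2}{\det(M_t)^2} + \frac{N_r^2\,t^3 \|u''(y+\sqrt{\nu}B_\cdot)\|_{L^\infty_t}^2}{\det(M_t)^3},
\]
and integrating in $r$ while invoking the identity $\int_0^t N_r^2\,\dd r = \det(M_t)$ a second time merges both pieces into $\lesssim t^3 \|u''(y+\sqrt{\nu}B_\cdot)\|_{L^\infty_t}^2/\det(M_t)^2$. A further integration in $s$ and an application of the second bound of Lemma \ref{Important bounds} with $p = 2$ then yield $\lesssim t^{-(n_0+3)}\nu^{-(n_0+1)}$, matching the diagonal contribution.

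The main obstacle is conceptual rather than computational: a priori, the extra factor of $\det(M_t)^{-1/2}$ produced by differentiating the denominator in $Y_{r,t}$ threatens to make the non-adapted correction strictly more singular than the diagonal term and so to degrade the final rate. The saving observation is that the identity $\int_0^t N_r^2\,\dd r = \det(M_t)$, used twice (first in estimating $|\mathcal{D}_s \det(M_t)|$, then again after integrating the pointwise bound in $r$), absorbs exactly this apparent extra singularity and reduces everything to the inverse second moment of $\det(M_t)/(t\|u''(y+\sqrt{\nu}B_\cdot)\|_{L^\infty_t})$, which is precisely the quantity tailor-made for the second estimate of Lemma \ref{Important bounds}.
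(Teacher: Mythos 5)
Your argument is correct and follows essentially the same route as the paper's proof: the diagonal term of the Skorokhod isometry collapses to $\mathbb{E}[(\nu\det(M_t))^{-1}]$ via the identity $\int_0^t N_r^2\,\dd r=\det(M_t)$, and the off-diagonal term is reduced, through the quotient rule, the same identity, and the Cauchy--Schwarz bound $\bigl(\int_0^t|N_r|\,\dd r\bigr)^2\leq t\det(M_t)$, to $t^2\,\mathbb{E}\bigl[(t\|u''(y+\sqrt{\nu}B_\cdot)\|_{L^\infty_t}/\det(M_t))^2\bigr]$, handled by the second estimate of Lemma \ref{Important bounds} with $p=2$. The only cosmetic difference is that you symmetrize the cross term $\mathcal{D}_sY_r\,\mathcal{D}_rY_s$ into $|\mathcal{D}_sY_r|^2$ by Cauchy--Schwarz before estimating, whereas the paper bounds the product directly; both yield the same rate.
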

\begin{proof}
As discussed in \eqref{ito isometry}, the first term in \eqref{Skorokhod bound} is simply 
\begin{equation}
\mathbb{E}\int_0^t \frac{1}{\nu (\det(M_t)(y))^2} \left(\int_{r}^tu'(y+\sqrt{\nu}B_s) \dd s -\frac{1}{t}\int_{0}^t\int_m^t u'(y+\sqrt{\nu}B_s)\dd s\dd m\right)^2 \dd r
\end{equation}
which we recognise as 
\begin{equation}
\mathbb{E}\left(\frac{1}{\nu \det(M_t)(y)}\right) \leq C\nu^{-1-n_0}t^{-n_0-3}
\end{equation}
by Lemma \ref{Important bounds}.
Therefore, it only remains to deal with the term involving the Malliavin derivatives. Let us compute 
\begin{align}
\mathcal{D}_zY_r 
&=\frac{1}{\det(M_t)(y)}\left(\int_{z \vee r}^tu''(y+\sqrt{\nu}B_s)\dd s-\frac{1}{t}\int_0^t \int_{m \vee z}^t u''(y+\sqrt{\nu}B_s) \dd s\dd m\right)\notag\\
&\quad-\frac{g(r)}{\det(M_t)(y)^2}\left(\int_0^t\left[g(r)\int_{z \vee r}^tu''(y+\sqrt{\nu}B_s)\dd s-\frac{1}{t}\int_0^t \int_{m \vee z}^tu''(y+\sqrt{\nu}B_s) \dd s\dd m\right]\dd r\right)
\end{align}
where 
\begin{equation}
g(r)=\int_{r}^tu'(y+\sqrt{\nu}B_s) \dd s -\frac{1}{t}\int_{0}^t\int_m^t u'(y+\sqrt{\nu}B_s)\dd s\dd m.
\end{equation}
Hence, using the fact that 
\begin{align}
\left|\int_{z \vee r}^tu''(y+\sqrt{\nu}B_s)\dd s-\frac{1}{t}\int_0^t \int_{m \vee z}^tu''(y+\sqrt{\nu}B_s) \dd s\dd m\right| \leq t \|u''(y+\sqrt{\nu}B_s)\|_{L^\infty_t},
\end{align}
we may upper bound the Malliavin derivative 
\begin{equation}
|\mathcal{D}_zY_r |\leq C \frac{t\|u''(y+\sqrt{\nu}B_s)\|_{L^\infty_t}}{\det(M_t)(y)}+\frac{t\|u''(y+\sqrt{\nu}B_s)\|_{L^\infty_t}|g(r)|}{\det(M_t)(y)^2}\int_0^t  |g(m)|\dd m,
\end{equation}
where $C$ is some numerical constant independent of $u,\nu,t$.
Hence, we may estimate 
\begin{align}
&\int_0^t\int_0^t \mathcal{D}_r Y_z \mathcal{D}_z Y_r \dd r \dd z \leq C \frac{t^2 (t\|u''(y+\sqrt{\nu}B_s)\|_{L^\infty_t})^2}{\det(M_t)(y)^2}\notag\\
&\qquad+\frac{(t\|u''(y+\sqrt{\nu}B_s)\|_{L^\infty_t})^2}{\det(M_t)(y)^3}\int_0^t \int_0^t \left(\int_0^t |g(m)|\dd m\right)|g(r)|\dd r \dd z\notag\\
&\qquad+\frac{(t\|u''(y+\sqrt{\nu}B_s)\|_{L^\infty_t})^2}{\det(M_t)(y)^4}\int_0^t \int_0^t\int_0^t |g(r)| |g(z)| \left(\int_0^t|g(m)|\dd m\right)^2\dd r\dd z.
\end{align}
Note further that by Cauchy-Schwarz, $(\int_0^t|g(m)|\dd m)^2\leq t\det(M_t)(y)$. Therefore, we bound this by
\begin{equation}
\int_0^t\int_0^t \mathcal{D}_r Y_z \mathcal{D}_z Y_r \dd r \dd z\leq C \frac{t^2(t\|u''(y+\sqrt{\nu}B_s)\|_{L^\infty_t})^2}{\det(M_t(y))^2}.
\end{equation}
Hence, taking expectation, Lemma \ref{Important bounds} yields an upper bound
\begin{equation}
\mathbb{E}\left(\int_0^t\int_0^t \mathcal{D}_r Y_z \mathcal{D}_z Y_r \dd r \dd z\right)\leq C(u) t^{2}\nu^{-(n_0+1)}t^{-(n_0+5)} =C(u)\nu^{-(n_0+1)}t^{-(n_0+3)}.
\end{equation}
Therefore, overall we have 
\begin{equation}
\mathbb{E}(\delta(Y)^2) \leq C(u) t^{-(n_0+3)}\nu^{-(n_0+1)},
\end{equation}
as desired.
\end{proof}
With this in hand, we can now easily provide a proof of the main result:
\begin{proof}[Proof of Theorem \ref{main result}]
Recall that $S_\nu(t)f_0(x,y)=\mathbb{E}f_0(X_t^{-1}(x,y))$. Let now $g=g(x,y)\in L^2(\mathbb{T}^2)$ be any function. Changing variables we have
\begin{align}
\int_{\mathbb{T}}P_kS_\nu(t)f_0(y)g(x,y)\dd y \dd x
&=\mathbb{E}\int_{\mathbb{T}^2}g(x,y)e^{ikx}\int_{\mathbb{T}}e^{-ikz}f_0(X_t^{-1}(z,y))\dd z \dd x\dd y\notag\\
&=\mathbb{E}\int_{\mathbb{T}}\int_{\mathbb{T}^2}e^{-ikX_t^1(z,y)}g(x,X^2_t(z,y))f_0(z,y)e^{ikx}\dd z\dd y \dd x.
\end{align}
Note now that $e^{-ikz}g(x,y)= i\partial_z \frac{1}{k}e^{-ikz}g(x,y)$. Hence, by the Malliavin integration by parts formula, it holds 
\begin{equation}
\mathbb{E}(e^{-ikX_t^1(z,y)}g(x,X_t^2(z,y)))=i\frac{1}{k}\mathbb{E}(e^{-ikX_t^1(z,y)}g(x,X_t^2(z,y))\delta(Y(y))),
\end{equation}
where $Y$ is given in \eqref{Y expression}. Applying Cauchy-Schwarz, we thus have
\begin{align}
&\int_{\mathbb{T}^2}|\mathbb{E}(e^{-ikX_t^1(z,y)}g(x,X_t^2(z,y)))|^2\dd z \dd y \notag\\
&\qquad \leq \frac{1}{k^2}\sup_{y \in \mathbb{T}}\mathbb{E}(\delta(Y_t(y))^2)\int_{\mathbb{T}^2}|e^{-ikX_t^1(z,y)}g(x,X_t^2(z,y)))|^2 \dd z \dd y.
\end{align}
Changing variables again, and applying Lemma \ref{Y bound}, we therefore deduce 
\begin{equation}
\int_{\mathbb{T}}P_kS_\nu(t)f_0(y)g(y)\dd y \leq C(u)\|f_0\|_{L^2(\mathbb{T}^2)}\|g\|_{L^2(\mathbb{T})}(k^{-2}t^{-(n_0+3)}\nu^{-(n_0+1)})^\frac{1}{2},
\end{equation}
for any $t\leq \nu^{-1}$. Therefore, by duality it holds 
\begin{equation}
\|P_kS_\nu(t)f_0\|_{L^2(\mathbb{T})}\leq C(u)\|f_0\|_{L^2(\mathbb{T}^2)}(k^{-2}t^{-(n_0+3)}\nu^{-(n_0+1)})^\frac{1}{2}.
\end{equation}
Finally, we want to take $t=(4C(u)^2\nu^{-(n_0+1)}k^{-2})^\frac{1}{n_0+3}$, at which point the $L^2$ norm will have been reduced by a half. But note that our inequality only holds for $t\leq \nu^{-1}$. Therefore, we get the claimed decay rate, as soon as 
\begin{equation}
(4C(u)^2\nu^{-(n_0+1)}k^{-2})^\frac{1}{n_0+3} \leq  \nu^{-1}
\end{equation}
which occurs when 
\begin{equation}
\frac{\nu}{|k|}\leq \frac{1}{2C(u)}
\end{equation}
This concludes the proof
\end{proof}
\subsection{Proof of the bounds on the Malliavin determinant} \label{Malliavin Proofs}
All that remains is to provide a proof of the bounds on the inverse moments of $\det(M_t)(y)$ in Lemma \ref{Important bounds}. 
As in the proof of H\"ormander theorem, the idea will be to estimate the probability of $\{\det(M_t)\leq \epsilon\}$. However, we want our estimates to have some uniformity in $\nu, t$. We will therefore introduce the following notation, which is a slightly modified version of the one in \cite{Hairer,Hairer_Mattingly_2011_}: We say a family of events $A^{\zeta}_\epsilon$ indexed by parameters $\zeta,\epsilon$ is ``almost false", if for any $p$ there exists a constant $C_p$ \emph{independent of $\zeta$} so that $\mathbb{P}(A^\zeta_\epsilon) \leq C_p \epsilon^p$ for all $\zeta$, and for all $\epsilon$ sufficiently small, independent of $\zeta$. Furthermore, we write $A^\zeta_\epsilon \Rightarrow_\epsilon B^\zeta_\epsilon$ (read, $A$ ``almost implies" $B$), to mean $A^\zeta_\epsilon \setminus B^\zeta_\epsilon$ is almost false. Furthermore, given families of random variables $Z_\epsilon^\zeta, W_\epsilon^\zeta$, we write $Z \leq_\epsilon W$ to mean that the event $\{Z_\epsilon^\zeta > W_\epsilon^\zeta\}$ is almost false. In the subsequent sections, the paramater $\zeta$ will typically consist of $\nu\in (0,1]$, $t \in (0,\nu^{-1}]$, as well as possibly some other free parameters which are independent of $\epsilon$.

With these preliminaries out of the way, we state the following lemma which will be fundamental to our discussion: (Note the presence of the additional parameter $v$, which will be essential later)
\begin{lemma}
\label{bound lemma}
Let $v,\gamma, \beta>0$ be arbitrary constants, where $v$ may be random. For any $\alpha <\frac{1}{2}$, there exist positive constants $C,q$ depending only on $\gamma, \beta,\alpha$ so that
\begin{align}
&\int_{0}^t \left(\int_r^tu'(y+\sqrt{\nu}B_s)\dd s-\frac{1}{t}\int_0^t \int_m^t u'(y+\sqrt{\nu}B_s)\dd s\dd m\right)^2\dd r \leq \epsilon v t^{\gamma}\nu^\beta\notag \\
&\quad\Rightarrow_\epsilon \quad \|u'(y+\sqrt{\nu}B_s)\|_{L^\infty_t}\leq C \epsilon^q \max \left\{v^\frac{1}{2}t^\frac{\gamma-3}{2}\nu^\frac{\beta}{2},(v^\alpha t^{\alpha (\gamma-3)} \nu^{\alpha \beta}h(y,\nu,t)^3 \nu^\frac{3}{2}t^\frac{3}{2})^\frac{1}{3+2\alpha},\right.\notag \\
&\hspace{7cm}\left. (v^\frac{\alpha}{2}t^\frac{\alpha (\gamma-3)+1}{2}\nu^\frac{\alpha \beta +1}{2} h(y,\nu,t))^\frac{1}{1+\alpha}\right\}.
\end{align}
where we have set 
$$
h(y,\nu,t)=\sup_{s,z \in [0,t],\tau \in [0,1]}|u''(y+\sqrt{\nu}(\tau B_s+(1-\tau)B_z))|.
$$
\end{lemma}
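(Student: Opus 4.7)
The plan is to convert the $L^2$-smallness of the centered antiderivative of $u'(y+\sqrt{\nu}B_\cdot)$ into an $L^\infty$-bound on $u'(y+\sqrt{\nu}B_\cdot)$ itself, using the H\"older regularity of Brownian motion. Set $G(r) := \int_r^t u'(y+\sqrt{\nu}B_s)\dd s$ and $\phi := G - \tfrac{1}{t}\int_0^t G(r)\dd r$, so that $\phi$ has mean zero on $[0,t]$, $\phi'(r) = -u'(y+\sqrt{\nu}B_r)$, and the hypothesis reads $\|\phi\|_{L^2(0,t)}^2 \leq \epsilon v t^\gamma \nu^\beta$; the target is $M := \|\phi'\|_{L^\infty_t}$. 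The mean value theorem gives $\|\phi'\|_{C^\alpha([0,t])} \leq K := \sqrt{\nu}\, h(y,\nu,t)\, \|B\|_{C^\alpha([0,t])}$, and by Brownian scaling $\|B\|_{C^\alpha([0,t])}\, t^{\alpha-1/2}$ has moments of all orders; consequently the event $\{K > \epsilon^{-\kappa}\sqrt{\nu}\, h\, t^{1/2-\alpha}\}$ is almost false for any $\kappa>0$, and this $\epsilon^{-\kappa}$ will be absorbed into the final $\epsilon^q$.

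The core is a deterministic interpolation. Let $r_0$ maximize $|\phi'|$; the H\"older bound forces $|\phi'| \geq M/2$ on $[r_0-\delta,r_0+\delta]\cap[0,t]$ whenever $\delta \leq (M/(2K))^{1/\alpha}$, so on this interval $\phi$ is monotone with slope $\geq M/2$, and its $M$-Lipschitz continuity then propagates this to $|\phi| \gtrsim M\delta$ on a subinterval of length $\gtrsim \delta$. This gives the fundamental estimate
\[
\|\phi\|_{L^2}^2 \gtrsim M^2\delta^3.
\]
Taking $\delta = \min\{(M/(2K))^{1/\alpha},\ t/2\}$ and splitting into cases yields
\[
M \lesssim \max\bigl\{\|\phi\|_{L^2}\, t^{-3/2},\ K^{3/(3+2\alpha)}\|\phi\|_{L^2}^{2\alpha/(3+2\alpha)}\bigr\},
\]
and substituting both the hypothesis on $\|\phi\|_{L^2}$ and $K \lesssim \epsilon^{-\kappa}\sqrt{\nu}h\, t^{1/2-\alpha}$ produces the first two entries of the maximum, with a strictly positive residual power of $\epsilon$ once $\kappa$ is chosen small enough relative to the $\epsilon$-gain in each case.

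The third entry in the maximum arises from a complementary Landau--Kolmogorov-type interpolation. Optimizing the Taylor bound $M\delta \leq |\phi(r_0\pm\delta)| + K\delta^{1+\alpha}/(1+\alpha)$ in $\delta$ yields $M \lesssim K^{1/(1+\alpha)}\|\phi\|_{L^\infty_t}^{\alpha/(1+\alpha)}$; combining this with the Agmon-type inequality $\|\phi\|_{L^\infty_t}^2 \leq 2\|\phi\|_{L^2}\|\phi'\|_{L^2}$ (valid because mean-zero $\phi$ has a zero) together with the trivial $\|\phi'\|_{L^2}\leq M\sqrt t$, then solving the resulting implicit inequality for $M$, gives a bound of the stated third form. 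The principal obstacle is the exponent bookkeeping: one must verify that the interplay of these interpolations and case distinctions reproduces the three terms of the maximum with the correct powers of $v,t,\nu,h$, and confirm that after absorbing the $\epsilon^{-\kappa}$ coming from the Brownian H\"older norm, a strictly positive power $\epsilon^q$ survives in each case. The slack supplied by the freedom $\alpha < 1/2$ in the H\"older exponent is precisely what makes this balance possible.
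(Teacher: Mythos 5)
Your proposal is correct, and it reaches the conclusion by a genuinely different (and arguably cleaner) route than the paper. The paper applies its abstract interpolation lemma (Lemma \ref{interpolation lemma}) twice: first to the running integral $f(z)=\int_0^z g(r)^2\,\mathrm{d}r$ to convert the hypothesis into an $L^\infty$ bound on $g$, and then to $g$ itself to reach $\|u'(y+\sqrt{\nu}B_\cdot)\|_{L^\infty_t}$; the three terms in the maximum are exactly the surviving combinations of the two binary case splits. You instead prove a single direct $L^2$-to-$L^\infty$-of-derivative interpolation by hand: localizing at the maximizer of $|\phi'|$, using the $C^\alpha$ bound $K$ to force a sign-definite interval of length $\delta=\min\{(M/2K)^{1/\alpha},t/2\}$, and propagating to the lower bound $\|\phi\|_{L^2}^2\gtrsim M^2\delta^3$. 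Since the two branches of this $\min$ are exhaustive, your argument yields $M\lesssim_\epsilon\max\{E_1,E_2\}$ where $E_1,E_2$ are (as one checks by substituting $\|\phi\|_{L^2}^2\le\epsilon v t^\gamma\nu^\beta$ and $K\le_\epsilon\epsilon^{-\kappa}\sqrt{\nu}\,h\,t^{1/2-\alpha}$) precisely the first two entries of the stated maximum, with the same probabilistic input as the paper (Brownian scaling of the $C^\alpha$ norm) and a positive residual power of $\epsilon$ once $\kappa<\alpha/3$. This is at least as strong as the stated conclusion, so the lemma follows; your approach buys a two-term bound and avoids the intermediate quantity $\|g\|_{L^\infty_t}$ through which the paper loses the mixed case that generates its third term.

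One small caveat: your Landau--Kolmogorov/Agmon derivation of the ``third entry'' does not reproduce the paper's third term --- solving $M\lesssim K^{1/(1+\alpha)}(2\|\phi\|_{L^2}M\sqrt{t})^{\alpha/(2(1+\alpha))}$ gives $M\lesssim K^{2/(2+\alpha)}(\|\phi\|_{L^2}\sqrt{t})^{\alpha/(2+\alpha)}$, whose exponents in $h$, $\nu$ and $t$ differ from $(v^{\alpha/2}t^{(\alpha(\gamma-3)+1)/2}\nu^{(\alpha\beta+1)/2}h)^{1/(1+\alpha)}$. This is harmless, because the third entry is redundant in your scheme (your two cases already cover everything, and enlarging the maximum only weakens the claim), but you should either drop that paragraph or flag it as superfluous rather than present it as recovering the paper's third term.
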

We will postpone the proof of this to end of the section.
With this lemma under our belt, we now prove the following, which effectively states that the assumption that $\det(M)$ is small already implies that our process is localised around one of the critical points of $u'$. In particular, this will allow us to Taylor expand around these critical points, and hence to replace $\|u'(y+\sqrt{\kappa}B_s)\|_{L^\infty_t}$ by a suitable power of $\|y+\sqrt{\kappa}B_s\|_{L^\infty_t}$.
\begin{lemma}
\label{small norm}
Fix any number $\eta$ sufficiently small, and let $\Tilde{v}$ be any deterministic positive number. Denote by $A$ the set of zeros of $u'(y)$, and let $\omega>0$ be any positive real number. Then, it holds that for $\epsilon$ small enough depending only on $\omega ,u,\Tilde{v},\eta$, and for any $t \leq \nu^{-1}$, we have
\begin{align*}
&\int_{0}^t \left(\int_r^tu'(y+\sqrt{\nu}B_s)\dd s-\frac{1}{t}\int_0^t \int_m^t u'(y+\sqrt{\nu}B_s)\dd s\dd m\right)^2\dd r \leq \epsilon \Tilde{v} t^{\omega+3}\nu^\omega \\
&\quad \Rightarrow_\epsilon \quad\sup_{s\leq t}\mathrm{dist}(y+\sqrt{\nu}B_s,A) \leq \eta.
\end{align*}
In particular, suppose that $u$ has a critical point of order $n$ at $y_0$. Fix $\eta(y_0)>0$ sufficiently small, so that for $|z-y_0|\leq \eta(y_0)$, there holds 
\begin{equation}
\label{Taylor expansion}
c_1 |z-y_0|^n \leq |u'(z)| \leq c_2 |z-y_0|^n, \qquad 
c_3|z-y_0|^{n-1} \leq |u''(z)| \leq c_4|z-y_0|^{n-1}.
\end{equation}
Then, if $|y-y_0|<\eta$, we have the almost implication
\begin{align*}
&\int_{0}^t \left(\int_r^tu'(y+\sqrt{\nu}B_s)\dd s-\frac{1}{t}\int_0^t \int_m^t u'(y+\sqrt{\nu}B_s)\dd s\dd m\right)^2\dd r \leq \epsilon \Tilde{v}t^{\omega+3}\nu^\omega   \\
&\quad \Rightarrow_\epsilon \quad h(y,\nu,t) \leq c(u)\|y-y_0+\sqrt{\nu}B_s\|_{L^\infty_t}^{n-1}.
\end{align*}
\end{lemma}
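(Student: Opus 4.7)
The plan is to apply Lemma~\ref{bound lemma} directly to the hypothesis, with the parameter choice $v = \Tilde{v}$, $\gamma = \omega + 3$, $\beta = \omega$, and any fixed $\alpha < 1/2$. Inspecting the three expressions inside the maximum on the right-hand side of Lemma~\ref{bound lemma}, each one factors (after grouping the $t$ and $\nu$ exponents) into a strictly positive power of $t\nu$ multiplied by powers of $\Tilde{v}$ and of $h(y,\nu,t)$. Since we restrict to $t \leq \nu^{-1}$ we have $t\nu \leq 1$, and since $u'' \in C(\mathbb{T})$ we have the deterministic bound $h(y,\nu,t) \leq \|u''\|_{L^\infty(\mathbb{T})}$. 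Hence the entire maximum is bounded by a constant $C$ depending only on $u, \Tilde{v}, \omega, \alpha$, and the almost-implication delivered by Lemma~\ref{bound lemma} simplifies to $\|u'(y+\sqrt{\nu}B_s)\|_{L^\infty_t} \leq C\epsilon^q$. A standard compactness argument then provides, for every $\eta > 0$, a $\delta > 0$ such that $|u'(z)| \leq \delta$ implies $\dist(z,A) \leq \eta$ (using that $|u'|$ attains a positive minimum on the compact set $\{\dist(\cdot,A) \geq \eta\}$). Choosing $\epsilon$ small enough that $C\epsilon^q \leq \delta$ gives the first claim.

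For the second assertion, I would additionally shrink $\eta$ so that $\eta \leq \eta(y_0)$ and $\eta$ is smaller than half the minimum separation between any two distinct critical points of $u$ -- both possible since $A$ is finite. Then the $\eta$-balls around different critical points are disjoint, and because $|y - y_0| < \eta$ the continuous Brownian path $s\mapsto y + \sqrt{\nu}B_s$ starts in $B_\eta(y_0)$. By the first part it cannot leave the $\eta$-neighborhood of $A$, so by connectedness it stays in the single component $B_\eta(y_0)$ throughout $[0,t]$, yielding $\|y-y_0+\sqrt{\nu}B_s\|_{L^\infty_t} \leq \eta$. For any $s,z \in [0,t]$ and $\tau\in[0,1]$, the triangle inequality applied to the convex combination gives
\begin{equation*}
|y - y_0 + \sqrt{\nu}(\tau B_s + (1-\tau)B_z)| \leq \|y - y_0 + \sqrt{\nu}B_s\|_{L^\infty_t} \leq \eta,
\end{equation*}
so \eqref{Taylor expansion} applies at each such point. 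Combining $|u''(\cdot)| \leq c_4|\cdot - y_0|^{n-1}$ with monotonicity of $x \mapsto x^{n-1}$ on $[0,\infty)$ and taking the supremum in the definition of $h$ produces the desired bound $h(y,\nu,t) \leq c(u)\|y-y_0+\sqrt{\nu}B_s\|_{L^\infty_t}^{n-1}$.

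The main point requiring care is the exponent bookkeeping in the first step: the choice $\gamma = \omega+3$, $\beta = \omega$ is calibrated precisely so that every $t,\nu$-dependence in the conclusion of Lemma~\ref{bound lemma} collects into a nonnegative power of $t\nu$, which is harmless under the standing constraint $t\nu \leq 1$. Once that reduction is in place, the rest of the argument is either already handled by Lemma~\ref{bound lemma} or is purely deterministic: a uniform continuity estimate for $u'$, path continuity of Brownian motion combined with the discrete separation of critical points, and an elementary Taylor expansion.
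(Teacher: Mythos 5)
Your proposal is correct and follows essentially the same route as the paper: apply Lemma~\ref{bound lemma} with $v=\Tilde{v}$, $\gamma=\omega+3$, $\beta=\omega$, observe that all $t,\nu$-dependence collects into nonnegative powers of $t\nu\leq 1$ so the conclusion reads $\|u'(y+\sqrt{\nu}B_s)\|_{L^\infty_t}\leq_\epsilon C\epsilon^q$, contradict this with the positive minimum of $|u'|$ off the $\eta$-neighbourhood of $A$, and then obtain the second claim from the Taylor bounds \eqref{Taylor expansion} together with convexity of the segment $\tau B_s+(1-\tau)B_z$. Your explicit connectedness argument localising the path to the single ball $B_\eta(y_0)$ is a detail the paper leaves implicit, but it is the intended justification and does not change the approach.
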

\begin{proof}
Set $y_0=0$ for convenience. Note that on the event $\eta <\sup_{s\leq t}\mathrm{dist}(y+\sqrt{\nu}B_s,A)$, we have that $\|u'(y+\sqrt{\nu}B_t)\|_{L^\infty_t} \geq \sigma$ for $\sigma=\inf_{\mathrm{dist}(x,A)\geq \eta}|u'(x)|>0$. Therefore we simply need to check the three possible cases from Lemma \ref{bound lemma}. Using the trivial bound $h(y,\nu,t)\leq \|u''\|_{L^\infty(\mathbb{T})}$, we see that in any of the cases from Lemma \ref{bound lemma}, we have the almost inequality (assuming $t\leq \nu^{-1}$)
\begin{equation}
\sigma \leq_{\epsilon} D(C,\|u''\|_{L^\infty(\mathbb{T})})\Tilde{v}^\frac{\omega}{2}\epsilon^q t^\frac{\omega}{2}\nu^\frac{\omega}{2} \leq D(C,\|u''\|_{L^\infty(\mathbb{T})}) \Tilde{v}^\frac{\omega}{2} \epsilon^{q},
\end{equation}
where $D$ is some constant depending only on $C$ and $\|u''\|_{L^\infty(\mathbb{T})}$. Then, for all $\epsilon <(\frac{1}{2}\Tilde{v}^\frac{\omega}{2}\sigma D )^\frac{1}{q}$, this bound cannot hold, completing the proof of the first claim. To prove the second part, we simply note that we have 
\begin{equation}
h(y,\nu,t)\leq_\epsilon c_4 \sup_{s,z \in [0,t],\tau \in [0,1]}|y-y_0+\sqrt{\nu}(\tau B_z+(1-\tau)B_s)|^{n-1} \leq c_4\|y-y_0+\sqrt{\nu}B_s\|_{L^\infty_t}^{n-1},
\end{equation}
where the last inequality follows by convexity.
\end{proof}
We now have all the tools to prove  Lemma \ref{Important bounds}. We begin by considering the first bound:
\begin{lemma}
\label{initial det estimate}
Let the maximal order of critical points of $u$ be $n_0$. Then, for any $p\geq 1$, there exists a constant $\Tilde{K}(u,p)>0$ independent of $\nu, t\leq \nu^{-1}$ so that 
\begin{equation}
\sup_{y \in \mathbb{T}}\mathbb{E}(\det(M(y))^{-p}) \leq \Tilde{K}(u,p)t^{-p(n_0+3)}\nu^{-n_0p}.
\end{equation}
\end{lemma}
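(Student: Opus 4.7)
The plan is to deduce the moment bound from a uniform small-ball estimate on $\det(M_t)(y)$ via the layer-cake identity. Writing
\begin{equation*}
\mathbb{E}\bigl(\det(M_t)(y)^{-p}\bigr)=p\int_0^\infty s^{-p-1}\Prob(\det(M_t)(y)\le s)\,\dd s,
\end{equation*}
and substituting $s=\epsilon t^{n_0+3}\nu^{n_0}$ pulls out the claimed prefactor $t^{-p(n_0+3)}\nu^{-n_0p}$. It therefore suffices to show that for every $m\ge 1$, the event $\{\det(M_t)(y)\le\epsilon t^{n_0+3}\nu^{n_0}\}$ has probability bounded by $C_m\epsilon^m$, uniformly in $y\in\mathbb{T}$, $\nu\in(0,1]$, $t\le\nu^{-1}$, and $\epsilon$ small.

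First I would localise using the first part of Lemma \ref{small norm} with $\omega=n_0$ and $\tilde v=1$: the event under consideration is almost contained in $\{\sup_{s\le t}\mathrm{dist}(y+\sqrt{\nu}B_s,A)\le\eta\}$. Fix $\eta$ smaller than half the minimal separation between distinct zeros of $u'$; since the latter event forces $\mathrm{dist}(y,A)\le\eta$ (by evaluating at $s=0$), it is empty when $\mathrm{dist}(y,A)>\eta$, which settles those $y$. Otherwise there is a unique closest critical point $y_0\in A$ of order $n\le n_0$ with $|y-y_0|\le\eta$. Setting $R:=\|y-y_0+\sqrt{\nu}B_s\|_{L^\infty_t}$, the second part of Lemma \ref{small norm} yields $h(y,\nu,t)\le_\epsilon c(u)R^{n-1}$, while \eqref{Taylor expansion}, evaluated at the time realising the sup in $R$, gives $\|u'(y+\sqrt{\nu}B_s)\|_{L^\infty_t}\ge c(u)R^n$.

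Next I would apply Lemma \ref{bound lemma} with $v=1$, $\gamma=n_0+3$, $\beta=n_0$, and some fixed $\alpha<\tfrac12$. Substituting $h\le c(u)R^{n-1}$ and using $t\nu\le 1$ in each of the three terms inside the max, the inequality $c(u)R^n\le C\epsilon^q\max\{\cdots\}$ can be rearranged in every case to
\begin{equation*}
R\le_\epsilon C(u)\,\epsilon^{q'}\,\sqrt{\nu t},
\end{equation*}
for some positive $q'=q'(\alpha,n_0)$. The key algebraic point is that for the specific choices $\gamma=n_0+3$, $\beta=n_0$, the exponent of $\nu t$ appearing in each of the three branches equals exactly $1/2$ when $n=n_0$ (and is at least $1/2$ for $n<n_0$, which then matches using $t\nu\le 1$); this is precisely what motivates the scaling in the statement.

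Finally I would apply Brownian scaling: with $B_{t\sigma}=\sqrt{t}\tilde B_\sigma$ for a standard Brownian motion $\tilde B$ on $[0,1]$ and $b:=(y-y_0)/\sqrt{\nu t}$, we have $R=\sqrt{\nu t}\,\|b+\tilde B_\sigma\|_{L^\infty([0,1])}$, so the event $\{R\le C(u)\epsilon^{q'}\sqrt{\nu t}\}$ becomes $\{\|b+\tilde B_\sigma\|_{L^\infty([0,1])}\le C(u)\epsilon^{q'}\}$ \emph{independently of $\nu$ and $t$}. Since $\|b+\tilde B_\sigma\|_\infty\ge|b|$, the event is empty unless $|b|\le C(u)\epsilon^{q'}$, in which case the triangle inequality reduces it to the standard Brownian small-ball estimate $\Prob(\sup_{\sigma\le 1}|\tilde B_\sigma|\le\delta)\le C_m\delta^m$ for every $m\ge 1$, uniformly in $b$. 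Combining yields the required polynomial decay in $\epsilon$. The main obstacle will be Step 2: tracking exponents through all three branches of Lemma \ref{bound lemma} precisely enough that $R$ scales as $\sqrt{\nu t}$, which is the exact scaling needed for the Brownian cancellation in Step 3 and hence for uniformity in $\nu,t$.
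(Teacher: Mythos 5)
Your proposal is correct and follows essentially the same route as the paper: reduce to a uniform small-ball estimate, localise near a critical point via Lemma \ref{small norm}, feed the Taylor bounds into Lemma \ref{bound lemma} to extract $\|y-y_0+\sqrt{\nu}B_s\|_{L^\infty_t}\lesssim_\epsilon \epsilon^{q'}\sqrt{\nu t}$, and conclude by Brownian scaling. The only cosmetic differences are that you make the layer-cake reduction explicit, plug in $\gamma=n_0+3$, $\beta=n_0$ directly and absorb the order-$n<n_0$ cases with $t\nu\le 1$ inside the rearrangement (the paper first proves the bound with the local order $n$ and converts at the end), and handle the shift $b$ via $\|b+\tilde B\|_\infty\ge |b|$ rather than the paper's two-case triangle-inequality split — all equivalent.
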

\begin{proof}
By standard arguments, it suffices to show that $\det(M_t)<\epsilon t^{n_0+3}\nu^{n_0} \Rightarrow_\epsilon \emptyset$. In view of Lemma \ref{small norm}, letting $\eta$ be so that the bounds \eqref{Taylor expansion} hold for all the critical points, we see that if $\mathrm{dist}(y,A)> \eta$, then indeed $\det(M_t)<\epsilon t^{n_0+3}\nu^{n_0} \Rightarrow_\epsilon \emptyset $. Hence, pick some $y_0 \in A$, say that the critical point at $y_0$ is of order $n$. For convenience sake, assume $y_0=0$. By Lemma \ref{small norm}, we see that in particular we have $h(y,\nu,t)\leq_\epsilon c(u) \|y+\sqrt{\nu}B_s\|^{n-1}_{L^
\infty}$. Similarly, we have 
\begin{equation}
\|y+\sqrt{\nu}B_s\|_{L^\infty_t}^n \leq_\epsilon c_1 \|u'(y+\sqrt{\nu}B_s)\|_{L^\infty_t}.
\end{equation}
Let us examine the upper bounds in Lemma \ref{bound lemma} again. In particular, the second bound becomes
\begin{equation}
\|y+\sqrt{\nu}B_s\|_{L^\infty_t}^{n-\frac{3(n-1)}{3+2\alpha}} \leq_\epsilon \Tilde{C}(u,n) \epsilon^q (t^{\alpha n+\frac{3}{2}}\nu^{\alpha n+\frac{3}{2}})^\frac{1}{3+2\alpha},
\end{equation}
and rearranging yields 
\begin{equation}
\|y+\sqrt{\nu}B_s\|_{L^\infty_t} \leq_\epsilon \Tilde{C}(u,n,\alpha)\epsilon^l t^\frac{1}{2}\nu^\frac{1}{2},
\end{equation}
for some $l>0$. Similarly, the third bound also reduces to this, and the first one also does trivially. Therefore, we just need to show 
\begin{equation}
\|y+\sqrt{\nu}B_s\|_{L^\infty_t} \leq \Tilde{C}(u,n,\alpha)\epsilon^l t^\frac{1}{2}\nu^\frac{1}{2}\quad \Rightarrow_\epsilon \quad\emptyset .
\end{equation}
We now split into two cases. Firstly, assume that $|y|\leq \frac{1}{2}\|\sqrt{\nu}B_s\|_{L^\infty_t}$. Then by the triangle inequality, we have 
\begin{equation}
\frac{1}{2}\|\sqrt{\nu}B_s\|_{L^\infty_t}\leq \|y+\sqrt{\nu}B_s\|_{L^\infty_t} \leq_\epsilon \Tilde{C}(u,n,\alpha)\epsilon^l t^\frac{1}{2}\nu^\frac{1}{2}.
\end{equation}
Next assume that $|y|\geq \frac{1}{2}\|\sqrt{\nu}B_s\|_{L^\infty_t}$. Then, we have that 
\begin{equation}
\|\sqrt{\nu}B_s\|_{L^\infty_t} \leq 2|y| \leq 2\|y+\sqrt{\nu}B_s\|_{L^\infty_t} \leq_\epsilon 2\Tilde{C}(u,n,\alpha) \epsilon^l t^\frac{1}{2}\nu^\frac{1}{2}.
\end{equation}
Hence, in either case we have 
\begin{equation}
\|\sqrt{\nu}B_s\|_{L^\infty_t}\leq_\epsilon 2\Tilde{C}(u,n,\alpha) \epsilon^l t^\frac{1}{2}\nu^\frac{1}{2}.
\end{equation}
Using Brownian scaling, this can easily be seen to be almost false. Therefore, there exists some $\epsilon_0(y_0)$ depending only on $u,\alpha$, so that for any $p>0$, there exists some $C_p(y_0)$ depending only on $u,p$, so that for any $y \in B_{\eta}(y_0)$, there holds 
\begin{equation}
\mathbb{P}(\det(M_t)(y)\leq \epsilon t^{n+3}\nu^n) \leq C_p(y_0)\epsilon^p,
\end{equation}
Furthermore, one can find analogous $\epsilon_0(y_i), C_p(y_i)$ for the other critical points of $u$, as well as trivially in the case where $\mathrm{dist}(y,A)\geq \eta$. Hence, for any $p\geq 1$, there exists a constant $K(u,p)$, so that for any $y$ in $B_{\eta}(y_i)$, where $y_i$ is a critical point of order $m$ say, it holds $\mathbb{E}(\det(M_t)(y)^{-p}) \leq K(u,p) t^{-p(m+3)}\nu^{-pm}$, (and for $\mathrm{dist}(y,A)\geq \eta$ it holds with $m=n_0$). Therefore, using furthermore that $t \leq \nu^{-1}$, we may in fact conclude that 
\begin{equation}
\sup_y \mathbb{E}(\det(M_t)(y)^{-p})\leq \Tilde{K}(u,p)t^{-p(n_0+3)}\nu^{-pn_0}
\end{equation}
completing the proof. 
\end{proof}
With this in hand, the second bound from Lemma \ref{Important bounds} follows similarly.
\begin{lemma}
Let $v =t\|u''(y+\sqrt{\nu}B_s)\|_{L^\infty_t}$, with $y$ close to a critical point of order $n$. Then there holds 
\begin{equation}
\det(M_t) \leq \epsilon v t^\frac{n+5}{2}\nu^\frac{n+1}{2}\quad \Rightarrow_\epsilon\quad \emptyset.
\end{equation}
In particular, it holds that for any $p\geq 1$, there exists $C(p)>0$ so that for $t \leq \nu^{-1}$
\begin{equation}
\mathbb{E}((\frac{t\|u''(y+\sqrt{\nu}B_s)\|_{L^\infty_t}}{\det(M_t)})^p)\leq C_p t^{-p\frac{n_0+5}{2}}\nu^{-p\frac{n_0+1}{2}}.
\end{equation}
\end{lemma}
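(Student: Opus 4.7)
The plan is to mimic the argument of Lemma \ref{initial det estimate} closely, now applying Lemma \ref{bound lemma} with the \emph{random} choice $v = t\|u''(y+\sqrt{\nu}B_s)\|_{L^\infty_t}$ and exponents $\gamma = (n+5)/2$, $\beta = (n+1)/2$. First, since $v\leq t\|u''\|_{L^\infty(\mathbb{T})}$ deterministically, the hypothesis $\det(M_t)\leq \epsilon v t^{(n+5)/2}\nu^{(n+1)/2}$ is contained in an event of the form $\det(M_t)\leq \epsilon' t^{\omega+3}\nu^\omega$ with $\omega = (n+1)/2$, so Lemma \ref{small norm} applies and localizes $y+\sqrt{\nu}B_s$ within $\eta$ of some critical point $y_0$ of order $n\leq n_0$. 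Writing $W := \|y-y_0+\sqrt{\nu}B_s\|_{L^\infty_t}$, the Taylor bounds \eqref{Taylor expansion} together with the second part of Lemma \ref{small norm} yield the almost bounds $v\leq c(u)tW^{n-1}$, $h(y,\nu,t)\leq c(u)W^{n-1}$, together with the almost lower bound $\|u'(y+\sqrt{\nu}B_s)\|_{L^\infty_t}\geq c(u)W^n$.

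Next, I would substitute these into each of the three upper bounds furnished by Lemma \ref{bound lemma}. The algebraic point is that with the chosen $\gamma,\beta$, each case reduces to an almost inequality of the form $W\leq C(u,n,\alpha)\epsilon^{\ell}(t\nu)^{1/2}$ for some $\ell>0$. For the first bound this is immediate: combining $W^n\lesssim \|u'\|_{L^\infty_t}$ with $\|u'\|_{L^\infty_t}\leq_\epsilon C\epsilon^q v^{1/2} t^{(\gamma-3)/2}\nu^{\beta/2}$ and $v\lesssim tW^{n-1}$ gives $W^{(n+1)/2}\lesssim_\epsilon \epsilon^q(t\nu)^{(n+1)/4}$. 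The second and third cases have been engineered (through the specific choice of $\gamma$ and $\beta$) so that the $W$-powers arising on the right via $v, h$ cancel against the $W^n$ on the left, and the remaining $t,\nu$-powers always come out with common exponent $1/2$ — a short but tedious computation analogous to the one appearing in Lemma \ref{initial det estimate}.

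Once $W\leq_\epsilon C\epsilon^{\ell}(t\nu)^{1/2}$ is established, the remainder of the argument is essentially verbatim from Lemma \ref{initial det estimate}: the triangle inequality split into $|y-y_0|\leq \tfrac12\|\sqrt{\nu}B_s\|_{L^\infty_t}$ and its complement gives, in both cases, $\|\sqrt{\nu}B_s\|_{L^\infty_t}\leq_\epsilon 2C\epsilon^\ell(t\nu)^{1/2}$, which is almost false by Brownian scaling. Hence $\det(M_t)\leq \epsilon vt^{(n+5)/2}\nu^{(n+1)/2}\Rightarrow_\epsilon\emptyset$, as claimed. The passage to the moment bound is standard: the tail estimate $\mathbb{P}(v/\det(M_t)\geq \lambda)\leq C_p(\lambda t^{(n+5)/2}\nu^{(n+1)/2})^{-p}$ valid for every $p$ and $\lambda$ large, integrated via $\mathbb{E}(W^{p_0})=p_0\int_0^\infty \lambda^{p_0-1}\mathbb{P}(W>\lambda)\dd\lambda$, gives $\mathbb{E}((v/\det(M_t))^{p_0})\leq C_{p_0}t^{-p_0(n+5)/2}\nu^{-p_0(n+1)/2}$. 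Taking the supremum over $y\in \mathbb{T}$ — handling the case $\mathrm{dist}(y,A)>\eta$ trivially as in Lemma \ref{initial det estimate} and using $t\leq \nu^{-1}$ to absorb the difference between orders $n<n_0$ and $n_0$ — gives the global bound with the worst exponents corresponding to $n_0$.

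The main obstacle is the algebraic bookkeeping in the three-case analysis of Lemma \ref{bound lemma}: the exponents $\gamma=(n+5)/2,\beta=(n+1)/2$ are essentially the unique choice that, once the near-critical-point identities $v\lesssim tW^{n-1}$, $h\lesssim W^{n-1}$, $\|u'\|\gtrsim W^n$ are plugged in, makes the $W$-powers and $(t\nu)$-powers collapse to the universal Brownian scaling $W\lesssim (t\nu)^{1/2}$. Once this verification is performed, the Lemma \ref{small norm} reduction, the triangle-inequality split, and the tail-to-moment passage all run identically to the first bound in Lemma \ref{Important bounds}.
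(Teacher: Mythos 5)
Your proposal is correct and follows essentially the same route as the paper: reduce to Lemma \ref{small norm} via the deterministic bound $v\leq t\|u''\|_{L^\infty(\mathbb{T})}$ with $\omega=\tfrac{n+1}{2}$, feed the Taylor bounds $v\lesssim tW^{n-1}$, $h\lesssim W^{n-1}$, $\|u'\|\gtrsim W^n$ into the three cases of Lemma \ref{bound lemma} to obtain $W\leq_\epsilon C\epsilon^{\ell}(t\nu)^{1/2}$, and conclude by the Brownian-scaling argument of Lemma \ref{initial det estimate}. Your explicit verification of the exponent bookkeeping (which the paper leaves as "argue as in Lemma \ref{initial det estimate}") checks out in all three cases.
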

\begin{proof}
Noting that $v\leq t\Tilde{v}$, where $\Tilde{v} = \|u''\|_{L^\infty(\mathbb{T})}$, we can apply Lemma \ref{small norm}, since the upper bound is now of the form $\epsilon^q \Tilde{v} t^{\omega+3}\nu^{\omega}$, with $\omega=\frac{n+1}{2}$. Next, we argue as in Lemma \ref{initial det estimate}, and observe that applying Lemma \ref{bound lemma} with $v=t\|u''(y+\sqrt{\kappa}B_s)\|_{L^\infty_t}$, together with the facts that close to a critical point, $\|u'(y+\sqrt{\nu}B_s)\|_{L^\infty_t} \geq c_1\|y+\sqrt{\nu}B_s\|_{L^\infty_t}^n$, and similarly $\|u''(y+\sqrt{\nu}B_s)\|_{L^\infty_t} \leq c_4\|y+\sqrt{\nu}B_s\|_{L^\infty_t}^{n-1}$, we get once again
\begin{equation}
\|y+\sqrt{\nu}B_s\|_{L^\infty_t} \leq_\epsilon \Tilde{C}(u,n,\alpha) \epsilon^q t^\frac{1}{2}\nu^\frac{1}{2}
\end{equation}
so we may conclude in exactly the same way as in Lemma \ref{initial det estimate}.
\end{proof}
All that remains now is to provide a proof of Lemma \ref{bound lemma}. To do so, we recall first the following well-known interpolation lemma (see e.g. \cite{Hairer_Mattingly_2011_})
\begin{lemma}
\label{interpolation lemma}
Let $f:[0,t] \to \mathbb{R}$ be $C^{1,\alpha}$, for some $\alpha \in (0,1]$. Then it holds 
\begin{equation}
\|\partial_t f\|_{L^\infty_t} \leq 4\|f\|_{L^\infty_t} \max\{t^{-1},\|f\|_{L^\infty_t}^{-\frac{1}{1+\alpha}}\|\partial_t f\|_{C^\alpha_t}^\frac{1}{1+\alpha}\}
\end{equation}
where $\|\partial_tf\|_{C^\alpha_t}$ denotes the best $\alpha$-H\"older constant of $\partial_t f$.
\end{lemma}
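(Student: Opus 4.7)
The statement is a classical interpolation inequality of Landau--Kolmogorov type, which I would prove by a direct argument based on the fundamental theorem of calculus. Writing $M := \|\partial_t f\|_{L^\infty_t}$ and $K := \|\partial_t f\|_{C^\alpha_t}$, the target reduces to establishing either $M \lesssim \|f\|_{L^\infty_t}/t$ or $M \lesssim \|f\|_{L^\infty_t}^{\alpha/(1+\alpha)} K^{1/(1+\alpha)}$ with a small numerical constant, and then combining the two cases via the maximum on the right-hand side.

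The plan is to pick $s_0 \in [0,t]$ at which $|\partial_t f|$ attains (or almost attains) $M$ and, after possibly reversing the orientation of the interval and flipping signs, to assume $\partial_t f(s_0) = M$ with $s_0 \leq t/2$. By the definition of the H\"older seminorm, for any $h \in (0,t/2]$ we then have $\partial_t f(s) \geq M - Kh^\alpha$ for all $s \in [s_0, s_0+h] \subset [0,t]$. Integrating this inequality over $[s_0, s_0+h]$ and bounding the difference $f(s_0+h) - f(s_0)$ from above by $2\|f\|_{L^\infty_t}$ produces the single key inequality
\[
Mh - Kh^{1+\alpha} \leq 2\|f\|_{L^\infty_t}.
\]

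To extract the stated bound, I would optimise over $h$. The natural choice is $h_\star := (M/(2K))^{1/\alpha}$, which makes $Kh_\star^\alpha = M/2$ and reduces the inequality to $Mh_\star/2 \leq 2\|f\|_{L^\infty_t}$; isolating $M$ then recovers the second term in the maximum. This choice is admissible only when $h_\star \leq t/2$. If instead $h_\star > t/2$, one has $K(t/2)^\alpha < M/2$, so that substituting $h = t/2$ into the key inequality and absorbing the resulting $Kh^{1+\alpha}$ contribution into the left-hand side yields $M \lesssim \|f\|_{L^\infty_t}/t$, which is the first term in the maximum. The main (and essentially only) obstacle is a clean bookkeeping of this dichotomy and of the numerical constants so as to meet the factor $4$; nothing beyond elementary calculus is required.
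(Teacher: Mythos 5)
The paper does not actually prove this lemma --- it is quoted as a known interpolation inequality with a pointer to the Hairer--Mattingly reference --- so your proposal is supplying a proof where the paper gives only a citation. Your argument is the standard one (and essentially the one in the cited source): locate a point $s_0$ where $|\partial_t f|$ is maximal, use the H\"older bound to keep $\partial_t f$ large on a neighbourhood of $s_0$, integrate, and optimise over the length of that neighbourhood. The dichotomy you describe is the right one, and the second branch works out: with $h_\star=(M/2K)^{1/\alpha}$ you get $M\le (4\|f\|_{L^\infty_t})^{\alpha/(1+\alpha)}(2K)^{1/(1+\alpha)}$, and since $4^{\alpha/(1+\alpha)}2^{1/(1+\alpha)}=2^{(2\alpha+1)/(1+\alpha)}\le 2^{3/2}<4$ this meets the stated constant.

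The one place your bookkeeping falls short is the first branch. By forcing $s_0\le t/2$ and $h\le t/2$ you only integrate over an interval of length $t/2$, and the chain $K(t/2)^{1+\alpha}<\tfrac{M}{2}\cdot\tfrac{t}{2}$ leaves you with $\tfrac{Mt}{4}\le 2\|f\|_{L^\infty_t}$, i.e.\ $M\le 8\|f\|_{L^\infty_t}t^{-1}$ --- a factor $2$ worse than the stated $4\|f\|_{L^\infty_t}t^{-1}$. This is immaterial for every use of the lemma in the paper (all constants are absorbed anyway), but if you want the constant $4$ as written, the fix is to work with the \emph{two-sided} set $J=\{s\in[0,t]:|s-s_0|\le h\}$ rather than a one-sided interval: $J$ always has length at least $\min\{h,t\}$, and for $h\le h_\star$ one has $|\partial_t f|\ge M/2$ on $J$ with constant sign, so the increment of $f$ across $J$ is at least $(M/2)\min\{h,t\}\le 2\|f\|_{L^\infty_t}$. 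Taking $h=h_\star$ and splitting according to whether $\min\{h_\star,t\}$ equals $h_\star$ or $t$ gives both terms of the maximum with the constant $4$. No other changes are needed; the rest of your argument is correct.
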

With this in hand, we can then provide the proof of Lemma \ref{bound lemma}
\begin{proof}[Proof of Lemma \ref{bound lemma}]
Let $f(z)=\int_0^z(\int_{r}^tu'(y+\sqrt{\nu}B_s) \dd s -\frac{1}{t}\int_{0}^t\int_m^t u'(y+\sqrt{\nu}B_s)\dd s\dd m)^2\dd r$. This is an increasing function in $z$, and so 
\begin{equation}
\|f\|_{L^\infty_t}=f(t)
\end{equation}
Therefore, consider the event 
\begin{equation}
\label{almost false event}
\|f\|_{L^\infty_t} \leq \epsilon t^{\gamma} \nu ^\beta v
\end{equation}
In the following, we will apply Lemma \ref{interpolation lemma} repeatedly. As we are interested in ensuring that $\epsilon$ can be chosen small independently of $t, \nu$, we will techincally always need to split up into the cases $t^{-1} \leq \|f\|_{L^\infty_t}^{-\frac{1}{1+\alpha}}\|\partial_t f\|_{C^\alpha_t}^\frac{1}{1+\alpha}$ and the converse (this is why the three different upper bounds appear in the statement of the result).  As this would be slightly arduous at times, and the situation where $t^{-1} \leq \|f\|_{L^\infty_t}^{-\frac{1}{1+\alpha}}\|\partial_t f\|_{C^\alpha_t}^\frac{1}{1+\alpha}$ will tend to be the more interesting one, we omit the other cases. However, they follow in exactly the same way.

Firstly, by Lemma \ref{interpolation lemma} we see that \eqref{almost false event} implies 
\begin{align}
&\left\|\left(\int_{r}^tu'(y+\sqrt{\nu}B_s) \dd s -\frac{1}{t}\int_{0}^t\int_m^t u'(y+\sqrt{\nu}B_s)\dd s\dd m\right)^2\right\|_{L^\infty_t} \notag\\
&\leq 4 \sqrt{2}\epsilon^\frac{1}{2}v^\frac{1}{2}t^\frac{\gamma}{2}\nu^\frac{\beta}{2}\left\|\left(\int_{r}^tu'(y+\sqrt{\nu}B_s) \dd s -\frac{1}{t}\int_{0}^t\int_m^t u'(y+\sqrt{\nu}B_s)\dd s\dd m\right)\right\|_{L^\infty_t}^\frac{1}{2} \notag\\
&\qquad \times\left\|u'(y+\sqrt{\nu}B_s)\right\|^\frac{1}{2}_{L_t^\infty}.
\end{align}
where we have used the relation
\begin{equation}
\|h^2\|_{\mathrm{Lip}_t} \leq 2\|h\|_{L^\infty_t}\|h\|_{\mathrm{Lip}_t}.
\end{equation}
where $\mathrm{Lip}_t$ denotes the best Lipschitz constant on $[0,t]$.
Rearranging, we see that 
\begin{align*}
&\left\|\left(\int_{r}^tu'(y+\sqrt{\nu}B_s) \dd s -\frac{1}{t}\int_{0}^t\int_m^t u'(y+\sqrt{\nu}B_s\dd s)\dd m\right)\right\|_{L^\infty_t}\\
&\qquad\qquad\leq (32)^\frac{1}{3}\epsilon^\frac{1}{3}v^\frac{1}{3}t^\frac{\gamma}{3}\nu^\frac{\beta}{3}\|u'(y+\sqrt{\nu}B_s)\|_{L^\infty_t}^\frac{1}{3}.
\end{align*}
Applying Lemma \ref{interpolation lemma} again, this time to 
\begin{equation}
r \mapsto \int_{r}^tu'(y+\sqrt{\nu}B_s) \dd s -\frac{1}{t}\int_{0}^t\int_m^t u'(y+\sqrt{\nu}B_s)\dd s\dd m
\end{equation}
we see that this implies for any $\alpha \in (0,\frac{1}{2})$
\begin{equation}
\|u'(y+\sqrt{\nu}B_s)\|_{L^\infty_t} \leq 4((32)^\frac{1}{3}\epsilon^\frac{1}{3}v^\frac{1}{3}t^\frac{\gamma}{3}\nu^\frac{\beta}{3}\|u'(y+\sqrt{\nu}B_s)\|_{L^\infty_t}^\frac{1}{3})^\frac{\alpha}{1+\alpha}(\|u'(y+\sqrt{\nu}B_s\|_{C^\alpha_t})^\frac{1}{1+\alpha}.
\end{equation}
Now, note that using the usual Brownian scaling, it holds that $\|\sqrt{\nu}B_s\|_{C^\alpha_t} \leq_{\epsilon} \epsilon^{-\delta}\nu^\frac{1}{2}t^{\frac{1}{2}-\alpha}$. Therefore, we have
\begin{equation}
\|u'(y+\sqrt{\nu}B_s)\|_{C^\alpha_t} \leq h(y,\nu,t) \|\sqrt{\nu}B_s\|_{C^\alpha_t}\leq_\epsilon \epsilon^{-\delta}h(y,\nu,t)\nu^{\frac{1}{2}}t^{\frac{1}{2}-\alpha},
\end{equation}
so that 
\begin{equation}
\|u'(y+\sqrt{\nu}B_s)\|_{L^\infty_t} \leq_\epsilon (32^\frac{\alpha}{3}\epsilon^{\alpha-3\delta}v^{\alpha}t^{\alpha(\gamma-3)}\nu^{\alpha \beta}h(y,\nu,t)^3\nu^{\frac{3}{2}}t^{\frac{3}{2}})^\frac{1}{3+2\alpha}
\end{equation}
completing the proof of the first case. The other cases follow similarly.
\end{proof}

\addtocontents{toc}{\protect\setcounter{tocdepth}{0}}

\section*{Acknowledgements}
The author thanks Michele Coti Zelati and Martin Hairer for helpful comments and inspiring discussions. The research of DV was funded by the Imperial College President's PhD Scholarships.

\addtocontents{toc}{\protect\setcounter{tocdepth}{1}}

\bibliographystyle{alpha}
\bibliography{bib.bib}

\end{document}